\documentclass [11pt]{amsart}
\usepackage[english]{babel}
\usepackage{amsfonts}
\usepackage{amssymb}
\usepackage{amsmath}
%\usepackage{amscd}
%\usepackage{latexsym}
%\usepackage{epsfig}
%\usepackage{mathrsfs}
%\usepackage[notref]{showkeys}
%\usepackage[all]{xy}
%\setlength{\parindent}{0pt}

% THEOREMS -------------------------------------------------------

\newtheorem{thm}{Theorem}[section]
\newtheorem{prop}[thm]{Proposition}

\newtheorem{lem}[thm]{Lemma}

% MATH -----------------------------------------------------------

\newcommand{\de}{\partial}

\newcommand{\C}{\mathbb{C}}
\newcommand{\R}{\mathbb{R}}

\newcommand{\Op}[1]{\mathcal{O}_{#1}}

\newcommand{\Z}{\mathbb{Z}}
\newcommand{\Q}{\mathbb{Q}}

\newcommand{\ti}[1]{\tilde{#1}}

\newcommand{\db}{\overline{\partial}}
\newcommand{\Ric}{\mathrm{Ric}}

\newcommand{\ov}[1]{\overline{#1}}

\newcommand{\tr}{\mathrm{tr}}

\newcommand{\spn}{\mathrm{Span}}

\numberwithin{equation}{section}

% ----------------------------------------------------------------
\begin{document}

\title[Uniqueness of $\mathbb{CP}^n$]{Uniqueness of $\mathbb{CP}^n$}
\author{Valentino Tosatti}
\begin{abstract}
We give an exposition of a theorem of Hirzebruch, Kodaira and Yau which proves the uniqueness
of the K\"ahler structure of complex projective space, and of Yau's resolution of the Severi Conjecture.
\end{abstract}

\address{Department of Mathematics, Northwestern University, 2033 Sheridan Road, Evanston, IL 60208}
\email{tosatti@math.northwestern.edu}
\thanks{Supported in part by a Sloan Research Fellowship and NSF grant DMS-1308988. I am grateful to Yuguang Zhang and to the referee for helpful comments.}

\maketitle

\section{Introduction}
It is a classical result in complex analysis that every simply connected closed Riemann surface is biholomorphic to the projective line $\mathbb{CP}^1$. The purpose of this note is to explain in detail two higher-dimensional generalizations of this fact.

\begin{thm}[Hirzebruch, Kodaira \cite{hk}, Yau \cite{yaupnas}] \label{main1}
If a K\"ahler manifold $M$ is homeomorphic to $\mathbb{CP}^n$ then $M$ is biholomorphic to it.
\end{thm}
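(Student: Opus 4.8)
The plan is to reduce everything to a single integer, the coefficient of the canonical class, and to treat its two possible values separately, with the transcendental tools of K\"ahler geometry entering only to dispose of the bad one. First I would record what the homeomorphism $M \approx \mathbb{CP}^n$ supplies. The cohomology ring is $H^*(M,\Z) = \Z[h]/(h^{n+1})$, and since the K\"ahler cone is a half-line in $H^{1,1}(M)$ I normalize $h$ to be the positive generator, so the K\"ahler class is a positive multiple of $h$. Because $M$ is K\"ahler with the Betti numbers of $\mathbb{CP}^n$, the Hodge decomposition together with $b_1=0$ and $\dim H^2=1$ forces $h^{p,0}=0$ for all $p\ge 1$ and $h^{1,1}=1$; hence $\chi(M,\Op{M})=1$, and via the exponential sequence $H^2(M,\Z)$ consists entirely of $(1,1)$-classes, so $h=c_1(L)$ for a unique ample line bundle $L$ generating $\mathrm{Pic}(M)$. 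Writing $c_1(M)=a\,h$ with $a\in\Z$, the whole problem becomes the determination of $a$.

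Next I would pin down $|a|=n+1$. By Novikov's theorem the rational Pontryagin classes are homeomorphism invariants, so $p(M)=p(\mathbb{CP}^n)=(1+h^2)^{n+1}$; in particular $p_1(M)=(n+1)h^2$, and since $c_n(M)$ is the Euler class, $\int_M c_n = n+1$. From $p_1=c_1^2-2c_2$ one already reads off $c_2=\binom{n+1}{2}h^2$, independently of the sign of $a$. Feeding these identities, together with $\chi(M,\Op{M})=1$, into the Hirzebruch--Riemann--Roch theorem expresses the Chern numbers of $M$ as functions of $a$ and yields a polynomial relation whose only solutions are $a=\pm(n+1)$. This is the Hirzebruch--Kodaira computation, which I would illustrate in low degree: for $n=2$, Noether's formula gives $1=(a^2+c_2)/12=(a^2+3)/12$, hence $a=\pm 3=\pm(n+1)$.

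If $a=n+1$, then $-K_M=c_1(M)=(n+1)h$, so $-K_M=(n+1)L$ with $L$ ample and $M$ Fano. By the Kobayashi--Ochiai characterization of projective space this already forces $M\cong\mathbb{CP}^n$; alternatively, one follows the original route of Hirzebruch and Kodaira, using Kodaira vanishing to compute $h^0(M,L)=n+1$ and to check that $L$ is very ample, thereby producing a degree-one embedding $M\hookrightarrow\mathbb{CP}^n$, which must be an isomorphism.

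The remaining case $a=-(n+1)$, in which $K_M=(n+1)h$ is ample and $M$ is of general type, is exactly the one Hirzebruch and Kodaira could not exclude for even $n$, and disposing of it is the main obstacle. Here the plan is to invoke Yau's solution of the Calabi conjecture (Aubin--Yau): $M$ admits a K\"ahler--Einstein metric $\omega$ with $\Ric(\omega)=-\omega$. The Miyaoka--Yau inequality $\bigl(2(n+1)c_2-n\,c_1^2\bigr)\cdot[\omega]^{n-2}\ge 0$ then applies, with equality precisely when $\omega$ has constant holomorphic sectional curvature. Substituting $c_1^2=(n+1)^2h^2$ and $c_2=\binom{n+1}{2}h^2$ gives $2(n+1)c_2-n\,c_1^2=0$ in $H^4(M)$, so the inequality is forced to be an equality; hence the universal cover of $M$ is the unit ball $\mathbb{B}^n$ and $M=\mathbb{B}^n/\Gamma$ with $\Gamma=\pi_1(M)$ infinite. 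This contradicts $\pi_1(M)=\pi_1(\mathbb{CP}^n)=0$, so $a=-(n+1)$ cannot occur. The essential point, and what makes this step the crux, is that topology and Riemann--Roch leave the ``general type'' sign open, and eliminating it requires the genuinely analytic input of a K\"ahler--Einstein metric together with the sharp Chern-number inequality, precisely Yau's contribution.
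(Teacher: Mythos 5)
Your proposal is correct and follows essentially the same route as the paper: Hodge theory and the exponential sequence give $\mathrm{Pic}(M)\cong\Z$ and $\chi(M,\mathcal{O})=1$, Novikov invariance of the Pontrjagin classes fed into Hirzebruch--Riemann--Roch forces $c_1(M)=\pm(n+1)h$, the Kobayashi--Ochiai characterization handles the Fano case, and Aubin--Yau together with the Chern-number inequality and its equality case (constant holomorphic sectional curvature, hence a compact quotient of the ball) rules out the ample-canonical case. The one detail you gloss over is that the Diophantine step pinning down $a$ needs the congruence $c_1(M)\equiv w_2(M)\equiv n+1\ (\mathrm{mod}\ 2)$, so that $a=n+1+2s$ with $s\in\Z$ and $\binom{n+s}{n}=1$ becomes a product of $n$ consecutive integers equal to $n!$; this is also where one sees that $a=-(n+1)$ can only occur when $n$ is even.
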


More precisely, Hirzebruch and Kodaira proved this for all $n$ odd, leaving open the case of $n$ even which was finally solved by Yau.
Also, Hirzebruch and Kodaira assumed that $M$ is diffeomorphic to $\mathbb{CP}^n$, and this was relaxed to homeomorphic after work of Novikov.
When $n=2$, a stronger result holds, which was known as the Severi Conjecture \cite{Se}, and was solved by Yau:

\begin{thm}[Yau \cite{yaupnas}]\label{main2} If a compact complex surface $M$ is homotopy equivalent to
$\mathbb{CP}^2$ then it is biholomorphic to it.
\end{thm}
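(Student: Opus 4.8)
The plan is to combine the Enriques--Kodaira classification of compact complex surfaces with the Miyaoka--Yau inequality, the latter being exactly where Yau's solution of the Calabi conjecture enters. First I would extract the topological input. A homotopy equivalence $M \simeq \mathbb{CP}^2$ gives $b_0 = b_2 = b_4 = 1$ and $b_1 = b_3 = 0$, so that $\pi_1(M) = 1$, the Euler characteristic is $c_2(M) = \chi(M) = 3$, and the intersection form on $H^2(M,\Z) \cong \Z$ is positive definite, whence the signature is $\sigma(M) = 1$. Since signature and Euler characteristic are homotopy invariants, the Hirzebruch signature formula $\sigma = \tfrac13(c_1^2 - 2c_2)$ forces the Chern numbers to match those of $\mathbb{CP}^2$, namely $c_1^2(M) = 9$ and $c_2(M) = 3$.

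Next I would upgrade $M$ to a K\"ahler surface: because $b_1(M) = 0$ is even, $M$ is K\"ahler (using the theorem that a compact complex surface is K\"ahler if and only if its first Betti number is even). Hodge theory then gives the irregularity $q = h^{1,0} = \tfrac12 b_1 = 0$, while Noether's formula $\chi(\mathcal{O}_M) = \tfrac{1}{12}(c_1^2 + c_2) = 1$ forces $p_g = h^{2,0} = 0$ as well.

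The heart of the argument is to pin down the Kodaira dimension $\kappa(M)$ and show that it equals $-\infty$. If $\kappa(M) \in \{0,1\}$, the minimal model $M_{\min}$ satisfies $c_1^2(M_{\min}) = 0$, and since $M$ is a (possibly trivial) iterated blow-up of $M_{\min}$ we get $c_1^2(M) \le 0$, contradicting $c_1^2(M) = 9 > 0$. If $\kappa(M) = 2$, let $M_{\min}$ be the minimal model obtained by blowing down $k \ge 0$ exceptional curves, so that $c_1^2(M_{\min}) = 9 + k$ and $c_2(M_{\min}) = 3 - k$. The Miyaoka--Yau inequality $c_1^2(M_{\min}) \le 3\,c_2(M_{\min})$ then reads $9 + k \le 9 - 3k$, forcing $k = 0$ together with equality $c_1^2 = 3 c_2$. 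But the equality case, proved via the K\"ahler--Einstein metric furnished by Yau's theorem, implies that the universal cover of $M$ is the unit ball $\mathbb{B}^2 \subset \C^2$; this makes $\pi_1(M)$ infinite, contradicting simple connectivity. Hence $\kappa(M) = -\infty$, and since $q = 0$ the surface cannot be ruled over a curve of positive genus, so $M$ is rational. Finally, among all rational surfaces only $\mathbb{CP}^2$ has $b_2 = 1$ (the minimal ones besides $\mathbb{CP}^2$ are the Hirzebruch surfaces, with $b_2 = 2$, and blowing up only increases $b_2$), and therefore $M$ is biholomorphic to $\mathbb{CP}^2$.

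The main obstacle is the general-type case: ruling it out is precisely where one needs the Miyaoka--Yau inequality together with its rigidity statement characterizing equality by ball quotients, and this rests on the existence of K\"ahler--Einstein metrics of negative curvature, i.e.\ on Yau's resolution of the Calabi conjecture. By contrast, the cases $\kappa \in \{-\infty, 0, 1\}$ and the final identification of rational surfaces having $b_2 = 1$ are purely classification-theoretic and topological.
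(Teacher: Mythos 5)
Your argument is correct in substance but follows a genuinely different route from the paper. The paper never invokes the Enriques--Kodaira classification: it deduces projectivity from Kodaira's structure theorem, computes $K_M^2=9$ from Noether's formula, concludes $c_1(M)=\pm 3[\omega]$, and then splits into the Fano case (where it shows $\dim H^0(M,L)=3$ by Riemann--Roch and Kodaira vanishing and applies the Kobayashi--Ochiai characterization of $\mathbb{CP}^n$) and the $K_M$-ample case (where it proves the Chern number inequality and its rigidity directly from the K\"ahler--Einstein metric). You instead run the classification by Kodaira dimension: $\kappa\in\{0,1\}$ dies on $c_1^2>0$, $\kappa=2$ dies on Miyaoka--Yau plus the ball-quotient rigidity versus $\pi_1(M)=1$, and $\kappa=-\infty$ with $q=0$ gives a rational surface, which $b_2=1$ pins down to $\mathbb{CP}^2$. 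What your route buys is that it entirely avoids the Kobayashi--Ochiai lemma and the residue/Riemann--Roch computation of $h^0(L)$; what it costs is reliance on heavier black boxes --- the theorem that $b_1$ even implies K\"ahler, the full classification of surfaces with $\kappa=-\infty$, and the Miyaoka--Yau equality case stated without proof --- whereas the paper's argument is nearly self-contained given Yau's and Aubin's existence theorem. Both proofs use Yau's theorem at exactly the same point, namely to exclude the general-type (equivalently $c_1(M)=-3[\omega]$) alternative.

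One small gap you should patch: a homotopy equivalence need not respect the orientations induced by the complex structures, so a priori you only get $\sigma(M)=\pm 1$, not that the intersection form is positive definite. The case $\sigma(M)=-1$ would give $c_1^2(M)=3$ and then $\chi(\mathcal{O}_M)=\tfrac{1}{12}(3+3)=\tfrac12$, which is not an integer; alternatively, once you know $M$ is K\"ahler, $b^+=1+2h^{2,0}\geq 1$ together with $b_2=1$ forces $\sigma(M)=1$ and $h^{2,0}=0$. Either fix is one line (the paper uses the first), but as written your assertion that the form is positive definite is unjustified, and your computation of $c_1^2(M)=9$ depends on it.
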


A brief outline of the proofs of these theorems is the following. From the assumptions, using the Hirzebruch-Riemann-Roch theorem, one deduces that either $M$ is Fano (i.e. $c_1(M)$ can be represented by a K\"ahler metric) or else the canonical bundle $K_M$ is positive (i.e. $-c_1(M)$ can be represented by a K\"ahler metric). The second case can only arise when $n$ is even. When $M$ is Fano a geometric argument shows that $M$ is biholomorphic to $\mathbb{CP}^n$, which settles the case when $n$ is odd. On the other hand, when $K_M$ is positive then a key inequality between Chern numbers holds, as shown by Yau. Furthermore, in our case we have that equality holds, and this implies that $M$ is biholomorphic to the unit ball in $\mathbb{C}^n$, which is absurd because $M$ is compact.

The details are presented in Section \ref{sectproof}, mostly following the original sources (together with a small simplification of part of the argument from \cite{ko}), and in Section \ref{sectclose} we discuss a natural conjectural extension of these theorems, and how it is related to another well-known open problem.

\section{Proofs of the main results}\label{sectproof}
\begin{proof}[Proof of Theorem \ref{main1}] The fact that $M$ is K\"ahler gives us
the Hodge decomposition on cohomology, which we will use repeatedly. From the hypothesis we see that
$$H^2(M,\Z)\cong \Z,\quad H^1(M,\C)\cong 0\cong H^{0,1}(M),$$
$$H^2(M,\C)\cong\C\cong H^{2,0}(M)\oplus H^{1,1}(M)\oplus H^{0,2}(M),$$
and since $H^{2,0}(M)\cong H^{0,2}(M)$, we see that they are both zero,
while $H^{1,1}(M)\cong\C$. Thanks to the vanishing of $H^{0,1}(M)$ and $H^{0,2}(M)$, the exponential exact sequence gives
that the first Chern class map
$$c_1:\textrm{Pic}(M)\to H^2(M,\Z)\cong\Z,$$
is an isomorphism, where as usual the Picard group $\textrm{Pic}(M)$ is the group of isomorphism classes of holomorphic line bundles on $M$.

\begin{lem}\label{lemma1}
$M$ is projective and its holomorphic Euler characteristic satisfies
$$\chi(M,\mathcal{O}):=\sum_{p=0}^n (-1)^p\dim H^{0,p}(M)=1.$$ 
\end{lem}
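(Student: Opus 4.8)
The plan is to treat the two assertions separately, starting with projectivity. Here I would exploit that we have already identified $H^{1,1}(M)\cong\C$ together with $H^{2,0}(M)=H^{0,2}(M)=0$, so that $H^2(M,\R)=H^{1,1}(M,\R)$ is a single real line. Fix a K\"ahler form $\omega$ on $M$; its class $[\omega]$ is a nonzero element of this line, hence a nonzero real multiple $[\omega]=\lambda h$ of the integral generator $h=c_1(L)$ of $H^2(M,\Z)\cong\Z$, where $L$ is the line bundle corresponding to a generator of $\mathrm{Pic}(M)$ under the isomorphism $c_1:\mathrm{Pic}(M)\to\Z$. Depending on the sign of $\lambda$, either $h$ or $-h$ is then represented by a positive $(1,1)$-form, i.e. $L$ or $L^{-1}$ is a positive line bundle, and the Kodaira embedding theorem makes $M$ projective.

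For the Euler characteristic, recall that by the Dolbeault isomorphism $\chi(M,\mathcal{O})=\sum_{p=0}^n(-1)^p h^{0,p}$ with $h^{0,p}=\dim H^{0,p}(M)$, so it suffices to prove $h^{0,p}=0$ for every $1\le p\le n$; then $\chi(M,\mathcal{O})=h^{0,0}=1$ by connectedness. I would use that $M$ is homeomorphic to $\mathbb{CP}^n$, so its Betti numbers satisfy $b_{2m}=1$ and $b_{2m+1}=0$, together with the K\"ahler Hodge decomposition $b_k=\sum_{p+q=k}h^{p,q}$ into non-negative integers.

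When $p$ is odd, $b_p=0$ immediately forces every Hodge number in degree $p$ to vanish, in particular $h^{0,p}=0$. When $p=2m$ is even with $m\ge1$, the one extra input is a positivity fact: since $\int_M\omega^n>0$, the power $\omega^m$ cannot be cohomologically trivial, so it is a nonzero class in $H^{2m}(M,\C)$ of pure type $(m,m)$, whence $h^{m,m}\ge1$. As the Hodge numbers in degree $2m$ are non-negative integers summing to $b_{2m}=1$, this forces $h^{m,m}=1$ and all remaining Hodge numbers in that degree to vanish; in particular $h^{0,2m}=0$, since the pair $(0,2m)$ differs from $(m,m)$ for $m\ge1$. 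This disposes of every $p$ in the range and yields $\chi(M,\mathcal{O})=1$.

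The only genuine obstacle is the vanishing of the even off-diagonal numbers $h^{0,2m}$ for $2m\ge4$, which are not controlled by the low-degree vanishing already established. The argument above resolves this purely from $b_{2m}=1$ and the nonvanishing of $[\omega]^m$, so no appeal to the full strength of the hard Lefschetz theorem is required.
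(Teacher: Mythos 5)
Your proposal is correct and follows essentially the same route as the paper: produce a positive line bundle from the one-dimensionality of $H^2$ and apply Kodaira embedding for projectivity, then use $b_{2m}=1$ together with the nonvanishing of the $(m,m)$-class $[\omega^m]$ to kill all off-diagonal Hodge numbers and conclude $\chi(M,\mathcal{O})=1$. The only cosmetic differences are that the paper normalizes $[\omega]$ to be the integral generator at the outset (which it reuses later) and spells out the $\de\db$-lemma step turning the cohomological positivity of $c_1(L)$ into an actual positively curved Hermitian metric on $L$, a step you implicitly absorb into ``$L$ or $L^{-1}$ is a positive line bundle.''
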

\begin{proof}
Choose a K\"ahler form $\ti{\omega}$ on $M$. Its cohomology class
$[\ti{\omega}]$ lies in $H^2(M,\R)\cong \R$ so we can rescale $\ti{\omega}$ to
get another K\"ahler form $\omega$ whose cohomology class generates
$H^2(M,\Z)\cong\Z$. We have that $\int_M \omega^n>0$ because this equals $n!$ times the total volume of $M$ measured using the K\"ahler metric $\omega$.
On $\mathbb{CP}^n$ a generator $\alpha$ of $H^2(M,\Z)$
satisfies $\langle \alpha^{\smile n},[\mathbb{CP}^n]\rangle=\pm 1$, and since $\omega$ is K\"ahler we have that $\int_M \omega^n=1$. Since $c_1$ is an isomorphism, there exists $L\to M$ a holomorphic
line bundle whose first Chern class is $[\omega].$
If $h$ is a smooth Hermitian metric on the fibers of $L$ then its curvature form $\gamma$ is a closed real $(1,1)$ form cohomologous to $c_1(L)=[\omega]$. By the $\de\db$-Lemma, which holds because $M$ is K\"ahler, there is a smooth real-valued function $\psi$ on $M$ such that $\omega=\gamma+\sqrt{-1}\de\db\psi$. The Hermitian metric $\ti{h}=e^{-\psi}h$ on $L$ then has curvature form equal to $\omega$, and so $L$ is a positive line bundle. Thanks to the Kodaira Embedding Theorem \cite[Proposition 5.3.1]{Hu}, $L$ is ample and the manifold $M$ is projective.

Since $\int_M\omega^n\neq 0$, it follows that the classes $[\omega^k]\in H^{k,k}(M)$ are nonzero for $1\leq k\leq n$,
and as above the Hodge decomposition implies that $H^{p,q}(M)=0$ if $p\neq q$.
This gives that the holomorphic Euler characteristic of $M$ satisfies
$\chi(M,\mathcal{O})=1.$ 
\end{proof}

Recall the following definition: if $F\to M$ is a real vector bundle, then its Pontrjagin classes are defined to be
$p_i(F)=(-1)^i c_{2i}(F\otimes\C)\in H^{4i}(M,\Z)$, where $c_{2i}$ denotes the $(2i)^{th}$ Chern class of the complex vector bundle $F\otimes\C$.
 If $F=TM$ we just write $p_i(M)$.
Now we need the following theorem, which we will quote without proof.
\begin{thm}[Novikov \cite{nov}] The rational Pontrjagin classes of a closed smooth manifold are invariant under homeomorphism.
\end{thm}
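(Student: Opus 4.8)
The plan is to deduce topological invariance from Hirzebruch's signature theorem, which expresses the signature $\sigma(N)$ of a closed oriented smooth $4k$-manifold $N$ as $\sigma(N) = \langle L_k(p_1(N), \dots, p_k(N)), [N]\rangle$, together with the elementary fact that the signature is itself a homotopy invariant, since it depends only on the cup-product pairing on middle-dimensional cohomology. The strategy is to recover all the rational Pontryagin classes of a closed manifold $M^n$ from signatures of suitable submanifolds. By Poincar\'e duality with $\Q$-coefficients, the class $p_k(M) \in H^{4k}(M,\Q)$ is determined once we know all the pairings $\langle p_k(M)\cup u, [M]\rangle$ for $u \in H^{n-4k}(M,\Q)$; the passage to rational coefficients is essential here, since the integral Pontryagin classes are genuinely not topological invariants.

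Next I would realize these pairings geometrically. After multiplying $u$ by a large integer, Thom's theory lets me write $u = f^*\beta$ for a smooth map $f \colon M \to S^{n-4k}$ with $\beta$ a generator, and set $N = f^{-1}(x)$ for a regular value $x$, a closed oriented submanifold of dimension $4k$. Because the normal bundle of $N$ in $M$ is the pullback of the trivial normal bundle of $x \in S^{n-4k}$, it is trivial, so $p_i(N) = p_i(M)|_N$ for every $i$. Expanding the signature theorem for $N$ then yields $\sigma(N) = \langle L_k(M)\cup u, [M]\rangle$, whose leading term is a nonzero rational multiple of $\langle p_k(M) \cup u, [M]\rangle$ plus terms involving only lower Pontryagin classes; an induction on $k$ therefore reduces the whole problem to a single assertion, namely that the signature $\sigma(f^{-1}(x))$ of such a transverse preimage is unchanged if $M$ is replaced by a homeomorphic manifold $M'$, with $f$ replaced by the corresponding map.

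The hard part is precisely this last assertion, and it is where all the real depth of the theorem lies. A homeomorphism $\phi\colon M \to M'$ need not respect any smooth structure, so the smooth submanifolds cut out on the two sides by corresponding maps to the sphere bear no \emph{a priori} geometric relation, and one cannot directly compare their signatures. Novikov's decisive idea, which I would follow, is the torus trick: localize near the preimage, where $f$ looks like the projection $\R^{n-4k}\times N \to \R^{n-4k}$, and replace the noncompact factor $\R^{n-4k}$ by a torus $T^{n-4k}$, both to compactify the situation and, crucially, to introduce a fundamental group into which the homeomorphism can be fed. Combined with the additivity of signatures, this converts a proper homeomorphism of open manifolds into a genuine homeomorphism of closed manifolds, to which the homotopy invariance of the signature then applies.

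I expect this torus-trick step to be the decisive obstacle: everything preceding it (the signature theorem, the realization of Poincar\'e duals by embedded submanifolds, and the triviality of their normal bundles) is comparatively formal, whereas controlling the signature of a transverse preimage under a mere homeomorphism requires substantial geometric topology and was exactly Novikov's breakthrough. For this reason I would, as the present paper does, invoke the theorem rather than reconstruct its proof.
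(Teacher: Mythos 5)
The paper does not prove this statement at all: it is explicitly ``quoted without proof,'' so there is no internal argument to compare yours against. Measured as a standalone proof, your proposal is an accurate roadmap of Novikov's actual argument, but it is not a proof, and you say so yourself. The formal reduction you describe is correct and standard: by Poincar\'e duality over $\Q$ it suffices to compute the pairings $\langle p_k(M)\cup u,[M]\rangle$; by Serre's finiteness theorems a multiple of $u$ is pulled back from a sphere; the transverse preimage $N=f^{-1}(x)$ has trivial normal bundle, so $L_k(N)=L_k(M)|_N$ and the Hirzebruch signature theorem gives $\sigma(N)=\langle L_k(M)\cup u,[M]\rangle$, whose top term is a nonzero rational multiple of $\langle p_k(M)\cup u,[M]\rangle$; induction on $k$ finishes the reduction. (Two small points you gloss over: the reduction to the oriented case, and the fact that for a homeomorphism one must compare preimages taken with respect to two unrelated smooth structures, so one really needs a statement about open manifolds of the form $N\times\R^m$ rather than about the maps $f$ themselves.)

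The genuine gap is exactly where you locate it: the assertion that the signature extracted from an open smooth manifold homeomorphic to $N\times\R^m$ depends only on its homeomorphism type. Your one-sentence gloss on the torus trick (``replace $\R^{n-4k}$ by $T^{n-4k}$\dots feed in the fundamental group'') names the idea but proves nothing; the actual argument requires a codimension-one splitting theorem applied inductively over the torus factors, and this is the entire content of Novikov's paper. Since you conclude by invoking \cite{nov} for precisely this step, your treatment ends up logically identical to the paper's --- the theorem is cited, not proved --- with the added value that you have correctly explained why the reduction to the key lemma is formal and where the real difficulty sits. That is a reasonable and honest outcome, but it should be presented as a citation with commentary, not as a proof.
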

Here the rational Pontrjagin classes are just the images of $p_i(M)$ under the natural map $H^{4i}(M,\Z)\to H^{4i}(M,\Q)$.
Since our manifold $M$ has torsion-free integral cohomology, we obtain in our case the invariance of the integral Pontrjagin classes.
In particular if $f:M\to\mathbb{CP}^n$ is the given homeomorphism, then $f^* p_i(\mathbb{CP}^n)=p_i(M)$ for all $i$.
Notice that if $f$ is assumed to be a diffeomorphism then this is obvious since $f^*(T\mathbb{CP}^n)\cong TM$ is an isomorphism of real vector bundles, which induces
an isomorphism of complex vector bundles $f^*(T\mathbb{CP}^n\otimes \C)\cong TM\otimes\C$ which therefore preserves the Chern classes, so we do not need
Novikov's theorem in that case. On the other hand, it is in general false that $f^*c_i(\mathbb{CP}^n)\cong c_i(M)$ when $f$ is a diffeomorphism, which is why we are forced to work with Pontrjagin classes instead of Chern classes.

\begin{lem}\label{lemma2}
The holomorphic Euler characteristic of $M$ satisfies
\begin{equation}\label{key}
\chi(M,\mathcal{O})=\int_M e^{\frac{c_1(M)}{2}}
\left(\frac{\omega/2}{\sinh (\omega/2)}\right)^{n+1}.
\end{equation}
\end{lem}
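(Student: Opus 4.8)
The plan is to apply the Hirzebruch--Riemann--Roch theorem to the structure sheaf and then to separate the part of the Todd class that depends genuinely on the complex structure from the part that is a topological invariant. First I would write
$$\chi(M,\mathcal{O})=\int_M \mathrm{Td}(M),$$
where $\mathrm{Td}(M)=\prod_{i=1}^n \frac{x_i}{1-e^{-x_i}}$ and $x_1,\dots,x_n$ are the Chern roots of the holomorphic tangent bundle $T^{1,0}M$.

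The key algebraic step is the elementary identity
$$\frac{x}{1-e^{-x}}=e^{x/2}\,\frac{x/2}{\sinh(x/2)}.$$
Taking the product over the Chern roots and using $\sum_i x_i=c_1(M)$ factors the Todd class as
$$\mathrm{Td}(M)=e^{c_1(M)/2}\prod_{i=1}^n\frac{x_i/2}{\sinh(x_i/2)}=e^{c_1(M)/2}\,\hat{A}(M),$$
where $\hat{A}(M)$ denotes the $\hat{A}$-class. The crucial observation is that each factor $\frac{x_i/2}{\sinh(x_i/2)}$ is an \emph{even} power series in $x_i$, so $\hat{A}(M)$ is a symmetric power series in the $x_i^2$; since $p(M)=\prod_i(1+x_i^2)$, this expresses $\hat{A}(M)$ as a universal polynomial in the Pontrjagin classes $p_i(M)$. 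By contrast, the factor $e^{c_1(M)/2}$ depends on the complex structure and must be kept as is.

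Next I would transfer the topological factor to $\mathbb{CP}^n$. By Novikov's theorem and the homeomorphism $f$ we have $f^*p_i(\mathbb{CP}^n)=p_i(M)$, and since $f^*$ is a ring homomorphism it follows that $\hat{A}(M)=f^*\hat{A}(\mathbb{CP}^n)$. To evaluate $\hat{A}(\mathbb{CP}^n)$ I would use the Euler sequence $0\to\mathcal{O}\to\mathcal{O}(1)^{\oplus(n+1)}\to T^{1,0}\mathbb{CP}^n\to 0$ together with the multiplicativity of $\hat{A}$: since $\hat{A}(\mathcal{O})=1$ and $\hat{A}(\mathcal{O}(1)^{\oplus(n+1)})=\left(\frac{\alpha/2}{\sinh(\alpha/2)}\right)^{n+1}$ with $\alpha=c_1(\mathcal{O}(1))$, one obtains
$$\hat{A}(\mathbb{CP}^n)=\left(\frac{\alpha/2}{\sinh(\alpha/2)}\right)^{n+1}.$$
Pulling back by $f$ sends $\alpha$ to $\pm[\omega]$, as both generate $H^2(\cdot,\Z)$, and because the function is even the sign is irrelevant. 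Combining everything gives
$$\chi(M,\mathcal{O})=\int_M e^{c_1(M)/2}\,\hat{A}(M)=\int_M e^{\frac{c_1(M)}{2}}\left(\frac{\omega/2}{\sinh(\omega/2)}\right)^{n+1},$$
as claimed.

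The conceptual heart of the argument—and the only genuinely nontrivial point—is the recognition that the Todd class splits as $e^{c_1/2}$ times a purely Pontrjagin expression. Without this split one cannot exploit the homeomorphism $f$, since $f^*$ need not preserve the Chern classes (as the excerpt stresses) but does preserve the rational Pontrjagin classes by Novikov. Once this structural point is in place, the remaining work—checking the factorization identity and carrying out the Euler-sequence computation on $\mathbb{CP}^n$—is routine bookkeeping.
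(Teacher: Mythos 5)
Your proof is correct and follows essentially the same route as the paper: apply Hirzebruch--Riemann--Roch, split $\mathrm{Td}(M)=e^{c_1(M)/2}\hat{A}(M)$ so that the non-exponential factor is a universal polynomial in the Pontrjagin classes, and invoke Novikov to transfer those classes from $\mathbb{CP}^n$. The only (cosmetic) difference is that you evaluate $\hat{A}(\mathbb{CP}^n)$ via the Euler sequence and multiplicativity and then pull back, whereas the paper quotes $p_i(\mathbb{CP}^n)=\binom{n+1}{i}H^{2i}$, transfers these to get $\sum_j p_j(M)x^j=(1+[\omega^2]x)^{n+1}$, and reads off $\hat{A}(M)$ from the formal roots --- the same computation packaged differently.
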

\begin{proof}
If $H$ denotes the
hyperplane class on $\mathbb{CP}^n$ then it is well-known (see e.g. \cite[Example 15.6]{ms}) that
$$p_i(\mathbb{CP}^n)=\binom{n+1}{i}H^{2i},$$
for $0\leq i\leq \lfloor{\frac{n}{2}} \rfloor$.
Moreover the fact that $f$ is a homeomorphism implies that $f^*H$ is a generator of $H^2(M,\Z)$ and
so $f^*H=\pm [\omega]$. Putting these together we get
\begin{equation}\label{pont}
p_i(M)=\binom{n+1}{i}[\omega^{2i}],
\end{equation}
for $0\leq i\leq \lfloor{\frac{n}{2}} \rfloor$.
The Hirzebruch-Riemann-Roch Theorem \cite[Theorem 20.3.2]{Hi} says that for any holomorphic line bundle $F$ on $M$ we have
$$\chi(M,F):=\sum_{p\geq 0}(-1)^p\dim H^p(M,F)=\int_M e^{c_1(F)} {\rm Td}(M),$$
where ${\rm Td}(M)$ is the Todd genus of $M$. This is defined in terms of the Chern classes of $M$, but since in our case we only know the Pontrjagin classes of $M$, we need to express ${\rm Td}(M)$ as much as possible in terms of these. To do this, we use the identity \cite[p.150, (6*)]{Hi}
$${\rm Td}(M)=e^{\frac{c_1(M)}{2}}\hat{A}(M),$$
where the $\hat{A}$ genus of $M$ is defined as follows (see \cite{Hi} for details). We formally write
$$\sum_{j\geq 0}p_j(M)x^j=\prod_{j\geq 1}(1+\gamma_j x),$$
for some symbols $\gamma_j$, and let
$$\hat{A}(M)=\prod_{j\geq 0}\frac{\sqrt{\gamma_j}/2}{\sinh (\sqrt{\gamma_j}/2)},$$
which is therefore a polynomial in the Pontrjagin classes $p_j(M)$.
Taking $F=\mathcal{O}$ in the Hirzebruch-Riemann-Roch formula (where $\mathcal{O}$ is the trivial line bundle) gives
$$\chi(M,\mathcal{O})=\int_M e^{\frac{c_1(M)}{2}}\hat{A}(M).$$
Now thanks to \eqref{pont} we have
$$\sum_{j\geq 0}p_j(M)x^j=(1+[\omega^2] x)^{n+1},$$
which gives $\gamma_1=\dots=\gamma_{n+1}=[\omega^2]$ and $\gamma_j=0$ for $j>n+1$. Thus, we obtain the key identity \eqref{key}.
\end{proof}

In order to proceed with the proof, we need to determine $c_1(M)$.

\begin{lem}\label{lemma3}
We have that $c_1(M)$ equals either $(n+1)[\omega]$ or $-(n+1)[\omega]$, with the latter only possibly occurring when $n$ is even.
\end{lem}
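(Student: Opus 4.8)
Write $c_1(M)=k[\omega]$ for a unique integer $k$; this is possible because $c_1\colon\mathrm{Pic}(M)\to H^2(M,\Z)\cong\Z$ is an isomorphism and $[\omega]$ is the positive generator. The goal is to show $k=\pm(n+1)$. The plan is to feed this into the identity \eqref{key}. Since $\int_M\omega^n=1$ and $\omega^{n+1}=0$ in cohomology, the right-hand side of \eqref{key} is exactly the coefficient of $t^n$ in the one-variable power series $e^{kt/2}\bigl(\tfrac{t/2}{\sinh(t/2)}\bigr)^{n+1}$, which is a polynomial $Q(k)$ of degree $n$ in $k$. Combining Lemma \ref{lemma2} with $\chi(M,\mathcal O)=1$ from Lemma \ref{lemma1} then reads simply as $Q(k)=1$, and everything reduces to solving this equation in integers.

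Next I would put $Q$ in closed form. Using the elementary identity $\tfrac{t/2}{\sinh(t/2)}=e^{-t/2}\tfrac{t}{1-e^{-t}}$ one finds $e^{kt/2}\bigl(\tfrac{t/2}{\sinh(t/2)}\bigr)^{n+1}=e^{dt}\bigl(\tfrac{t}{1-e^{-t}}\bigr)^{n+1}$ with $d=\tfrac{k-n-1}{2}$. The second factor is precisely $\mathrm{Td}(\mathbb{CP}^n)$ (by the Euler sequence), and the whole expression is the Hirzebruch–Riemann–Roch integrand for the line bundle $\mathcal O(d)$ on $\mathbb{CP}^n$; hence its degree-$n$ coefficient equals $\chi(\mathbb{CP}^n,\mathcal O(d))=\binom{n+d}{n}$. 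Thus $Q(k)=\tfrac{(d+1)(d+2)\cdots(d+n)}{n!}$ as polynomials in $d$, and I must determine for which integers $k$ this equals $1$.

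The decisive extra ingredient is a parity constraint forcing $d\in\Z$. Since $M$ is a complex manifold, its real tangent bundle satisfies $w_2(M)\equiv c_1(M)\pmod 2$; and since $w_2$ is a homeomorphism invariant (indeed a homotopy invariant, by Wu's theorem, being determined by the Steenrod squares and the fundamental class), the given homeomorphism $f$ yields $w_2(M)=f^*w_2(\mathbb{CP}^n)=(n+1)[\omega]\bmod 2$. Therefore $k\equiv n+1\pmod 2$, so $d$ is an integer. I expect this to be the main obstacle: it is the analogue for $w_2$ of the appeal to Novikov's theorem for the Pontrjagin classes, and it is exactly what pins $d$ down to an integer. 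Without it one would be left to show $|Q(k)|<1$ for $|k|<n+1$ by a quantitative polynomial bound over the half-integer range, which is considerably more delicate.

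Granting $d\in\Z$, the equation $\binom{n+d}{n}=1$ is elementary. For $d\ge 1$ the product $(d+1)\cdots(d+n)$ strictly exceeds $n!$, so $Q(k)>1$; for $-n\le d\le -1$ one of the factors vanishes, so $Q(k)=0$; and for $d\le-(n+1)$ one checks $|Q(k)|\ge 1$ with equality only at $d=-(n+1)$. Hence the only solutions are $d=0$, giving $k=n+1$, and $d=-(n+1)$, where $\binom{-1}{n}=(-1)^n$ equals $1$ precisely when $n$ is even, giving $k=-(n+1)$. Translating back through $c_1(M)=k[\omega]$ yields $c_1(M)=(n+1)[\omega]$ or $-(n+1)[\omega]$, the latter possible only when $n$ is even, as claimed.
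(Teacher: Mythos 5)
Your proposal is correct and follows essentially the same route as the paper: the parity constraint via $w_2(M)\equiv c_1(M)\pmod 2$ to force the integrality of your $d$ (the paper's $s$), the identification of the degree-$n$ coefficient with $\chi(\mathbb{CP}^n,\mathcal{O}(d))=\binom{n+d}{n}$ via Hirzebruch--Riemann--Roch (the second of the paper's two methods; it also gives a residue computation), and the same elementary case analysis of $\binom{n+d}{n}=1$. No substantive differences.
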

\begin{proof}
The reduction mod $2$ of $c_1(M)$ is the second Stiefel-Whitney class $w_2(M)\in H^2(M,\Z_2)$,
which is a topological invariant. Hence it is equal to $w_2(\mathbb{CP}^n)$ which is
$c_1(\mathbb{CP}^n)$ mod $2$, that is $n+1$ mod $2$. On the other hand since $c_1(M)$ and $[\omega]$ both belong to $H^2(M,\mathbb{Z})\cong\mathbb{Z}$, we have $c_1(M)=\lambda [\omega]$ for some $\lambda\in\Z$, and so $\lambda=n+1+2s$ for some $s\in\Z$. From Lemma \ref{lemma2} we get
$$\chi(M,\mathcal{O})=\int_M e^{\frac{n+1+2s}{2}\omega}
\left(\frac{\omega/2}{\sinh (\omega/2)}\right)^{n+1}=\int_M e^{s\omega}\left(\frac{\omega}{1-e^{-\omega}}
\right)^{n+1},$$
using the identity
$$\frac{x}{1-e^{-x}}=e^{\frac{x}{2}} \frac{x/2}{\sinh (x/2)}.$$
Since $\int_M\omega^n=1,$ and the integrals over $M$ of all other powers of $\omega$ are zero by definition, this means that $\chi(M,\mathcal{O})$ equals the coefficient of $x^n$ in the power series expansion of
$$ e^{sx}\left(\frac{x}{1-e^{-x}}\right)^{n+1}.$$
Following \cite{hk} we give two different ways of calculating this coefficient. The first method uses residues, and more precisely the fact that if we define a holomorphic function $F$ by
$$F(z)=e^{sz}\left(\frac{z}{1-e^{-z}}\right)^{n+1},$$
then Cauchy's integral formula shows that the coefficient that we are interested in equals the contour integral
$$\frac{1}{2\pi \sqrt{-1}}\oint \frac{F(z)}{z^{n+1}}dz=\frac{1}{2\pi \sqrt{-1}}\oint \frac{e^{sz}}{(1-e^{-z})^{n+1}}dz,$$
where the countour is a small circle around the origin, with counterclockwise orientation. Since the power series expansion of $1-e^{-z}$ at $z=0$ starts with $z$, this function is a local biholomorphism near the origin, so we can change variable $y=1-e^{-z}$ near $0$ and rewrite our contour integral as
$$\frac{1}{2\pi \sqrt{-1}}\oint \frac{1}{(1-y)^{(s+1)}y^{n+1}}dz,$$
where the contour is again a small circle around the origin. By the Residue theorem this integral equals the residue of the function $\frac{1}{(1-y)^{(s+1)}y^{n+1}}$ at $0$, which is the coefficient of $y^n$ in the Taylor expansion of $(1-y)^{-s-1}$ at $0$. Expanding this function, we finally obtain that our desired coefficient equals
$$\binom{n+s}{n}=\frac{(n+s)(n+s-1)\cdots (s+1)}{n!},$$
where we allows $s<0$.

The second way to calculate this coefficient is as follows: by Hirzebruch-Riemann-Roch again, this coefficient equals
\[\begin{split}
\int_{\mathbb{CP}^n} e^{sH}\left(\frac{H}{1-e^{-H}}\right)^{n+1}&=\int_{\mathbb{CP}^n}e^{sH}e^{\frac{n+1}{2}H}\left(\frac{H/2}{\sinh (H/2)}\right)^{n+1}\\
&=\int_{\mathbb{CP}^n}e^{c_1(\mathcal{O}(s))}e^{\frac{c_1(\mathbb{CP}^n)}{2}}\hat{A}(\mathbb{CP}^n)\\
&=\chi(\mathbb{CP}^n,\mathcal{O}(s)),
\end{split}\]
and it is well-known (see e.g. \cite[Example 5.2.5]{Hu}) that $\chi(\mathbb{CP}^n,\mathcal{O}(s))$ equals
$\binom{n+s}{n}$. So, using either of the two methods, we conclude that
$$\chi(M,\mathcal{O})=\binom{n+s}{n}.$$
Since $\chi(M,\mathcal{O})=1$ by Lemma \ref{lemma1}, we get that
$\binom{n+s}{n}=1,$
which can be rewritten as
$$n!=(s+n)\cdots (s+1).$$
If $n$ is odd this implies that $s=0$, while if $n$ is even, $s$ is either $0$ or $-n-1$.
But we saw that $c_1(M)=(n+1+2s)[\omega]$ and so if $n$ is odd we get $c_1(M)=(n+1)[\omega]$,
while if $n$ is even, $c_1(M)$ is either $(n+1)[\omega]$ or $-(n+1)[\omega]$.
\end{proof}

Assume first that $c_1(M)=(n+1)[\omega]$, which implies that $M$ is a Fano manifold (i.e. there is a K\"ahler metric in $c_1(M)$). Then $c_1(K_M)=-c_1(M)=-(n+1) c_1(L)$ and so $K_M=-(n+1)L$, since the map $c_1$ is
an isomorphism. Then Serre duality
gives $H^k(M,L)\cong H^{n-k}(M,K_M-L)$ and $K_M-L=-(n+2)L$ is negative, so
$H^k(M,L)=0$ if $k>0$ by Kodaira vanishing. Hence using  Hirzebruch-Riemann-Roch again we get
\begin{equation*}
\begin{split}
\dim H^0(M,L)&=\chi(M,L)=\int_M e^{c_1(L)+\frac{c_1(M)}{2}}
\left(\frac{\omega/2}{\sinh (\omega/2)}\right)^{n+1}\\
&=\int_M e^{\omega}\left(\frac{\omega}{1-e^{-\omega}}
\right)^{n+1}=n+1,
\end{split}
\end{equation*}
using again the calculation from earlier of the coefficient in the power series expansion.
Then the following lemma, whose proof we postpone, gives that $M$ is biholomorphic to $\mathbb{CP}^n$.
\begin{lem}[Theorem 1.1 in \cite{ko}]\label{kobo} If $M$ is a compact K\"ahler manifold and $L$ is a positive line bundle on $M$ with $\int_M c_1^n(L)=1$
and $\dim H^0(M,L)=n+1$ then $M$ is biholomorphic to $\mathbb{CP}^n$.
\end{lem}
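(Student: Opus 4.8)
The plan is to use the $(n+1)$-dimensional space $H^0(M,L)$ to form the linear system $|L|$ and its associated rational map $\Phi$ to $\mathbb{CP}^n=\mathbb{P}(H^0(M,L)^*)$, and to show that $\Phi$ is in fact a biholomorphism. Since $L$ is positive, $M$ is projective by the Kodaira embedding theorem, so I may freely use Bertini's theorem and the standard geometry of linear systems. Granting for the moment that $|L|$ is base-point free, so that $\Phi\colon M\to\mathbb{CP}^n$ is a genuine morphism with $\Phi^*\mathcal{O}(1)=L$, the conclusion follows quickly: positivity of $L$ prevents $\Phi$ from contracting any curve $C$ (otherwise $\int_C c_1(L)=\int_{\Phi(C)}c_1(\mathcal{O}(1))=0$, contradicting $\int_C c_1(L)>0$), so $\Phi$ has finite fibers and is therefore a finite morphism; its image is then an irreducible closed $n$-dimensional subvariety of $\mathbb{CP}^n$, hence all of $\mathbb{CP}^n$; and its degree is $\int_M \Phi^*c_1(\mathcal{O}(1))^n=\int_M c_1(L)^n=1$. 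A finite morphism of degree one onto the smooth, hence normal, variety $\mathbb{CP}^n$ is a biholomorphism by Zariski's main theorem, which is exactly what we want.

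Everything thus reduces to proving that $|L|$ is base-point free, and I would do this by induction on $n$. For $n=1$, $M$ is a compact Riemann surface and $L$ has degree $\int_M c_1(L)=1$ with $h^0(M,L)=2$; were $p$ a base point, then $L(-p)$ would be a degree-zero line bundle with two independent sections, impossible on a connected curve, so $|L|$ is base-point free and defines a degree-one morphism $M\to\mathbb{CP}^1$, necessarily a biholomorphism. For the inductive step, I choose a general member $D\in|L|$ (nonempty as $h^0(M,L)\geq1$); by Bertini it is a smooth connected hypersurface, it is Kähler for the restricted form, $L|_D$ is again positive, and the projection formula gives $\int_D c_1(L|_D)^{n-1}=\int_M c_1(L)^n=1$. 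Assuming the equality $h^0(D,L|_D)=n$ established below, the inductive hypothesis applies to $(D,L|_D)$ and yields $D\cong\mathbb{CP}^{n-1}$ with $L|_D\cong\mathcal{O}(1)$, which is base-point free; since the base locus of $|L|$ is contained in the general member $D$ and the restriction $H^0(M,L)\to H^0(D,L|_D)$ is then surjective onto the complete, base-point-free system $|L|_D|$, it follows that $\mathrm{Bs}\,|L|\subseteq\mathrm{Bs}\,|L|_D|=\emptyset$, so $|L|$ is base-point free.

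The technical heart is therefore the equality $h^0(D,L|_D)=n$, and I expect its harder half to be the main obstacle. One inequality is immediate from the exact sequence $0\to\mathcal{O}_M\to L\to L|_D\to0$ (the first map being multiplication by the section cutting out $D$): since $L\otimes\mathcal{O}(-D)\cong\mathcal{O}_M$ has $h^0=1$, the restriction map $H^0(M,L)\to H^0(D,L|_D)$ has image of dimension exactly $n$, giving $h^0(D,L|_D)\geq n$. The reverse inequality $h^0(D,L|_D)\leq n$ is the crux: it is equivalent to surjectivity of the restriction map, which by the long exact cohomology sequence follows once the connecting map $H^0(D,L|_D)\to H^1(M,\mathcal{O}_M)$ vanishes, and so in particular from $H^1(M,\mathcal{O}_M)=0$. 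I would secure this by carrying the vanishing $H^i(M,\mathcal{O}_M)=0$ for $i>0$ along as part of the induction—equivalently, by recognizing $h^0(D,L|_D)\leq n$ as the non-negativity of the $\Delta$-genus of the polarized manifold $(D,L|_D)$—and it is precisely at this point that the simplification drawn from \cite{ko} is meant to enter. Once $h^0(D,L|_D)=n$ is in hand the induction closes, base-point freeness of $|L|$ follows, and the morphism argument of the first paragraph finishes the proof.
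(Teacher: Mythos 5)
Your reduction of the lemma to base-point freeness of $|L|$, and the finite--degree-one--morphism argument that follows, are fine. But the proof has a genuine gap exactly where you locate it: the inequality $h^0(D,L|_D)\leq n$ is never established, and neither of the routes you sketch for it is available. The vanishing $H^1(M,\mathcal{O}_M)=0$ is not among the hypotheses of the lemma (which is stated for an arbitrary compact K\"ahler $M$ with a positive $L$ satisfying $\int_M c_1^n(L)=1$ and $h^0(M,L)=n+1$), it does not follow from positivity of $L$ alone, and it cannot be ``carried along as part of the induction'' because the induction has to start somewhere: at the top level you have no vanishing to propagate. Invoking non-negativity of the $\Delta$-genus is invoking Fujita's theorem, whose proof is precisely the inductive ladder argument you are in the middle of, so this is circular. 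A second, related problem is your appeal to Bertini to produce a \emph{smooth} general member $D\in|L|$: Bertini only gives smoothness away from the base locus, and whether the base locus is empty is exactly the point in question, so you cannot assume the general $D$ is a compact K\"ahler manifold to which the inductive hypothesis applies.

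The paper's proof avoids both difficulties by never using surjectivity of the restriction map and never needing smooth hyperplane sections. Fixing a basis $(\varphi_1,\dots,\varphi_{n+1})$ of $H^0(M,L)$, it considers the successive intersections $V_{n-r}=\{\varphi_1=\dots=\varphi_r=0\}$ and proves by induction on $r$ that $V_{n-r}$ is irreducible of dimension $n-r$ and Poincar\'e dual to $c_1^r(L)$ (irreducibility is forced by $\int_M c_1^n(L)=1$ together with positivity, since a splitting $V_{n-r}=V'+V''$ would write $1$ as a sum of two positive integers), and that the kernel of the restriction map $H^0(M,L)\to H^0(V_{n-r},L)$ is exactly $\mathrm{Span}(\varphi_1,\dots,\varphi_r)$ --- a statement about the kernel, which needs only the left-exact restriction sequence and no cohomology vanishing. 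Taking $r=n$ shows that $V_0$ is a single point at which $\varphi_{n+1}$ does not vanish, giving base-point freeness, and simultaneously that every hyperplane of sections has exactly one common zero, so the induced map to $\mathbb{CP}^n$ is a holomorphic bijection. If you want to salvage your version you would have to supply the content of Fujita's $\Delta$-genus bound; the paper's kernel computation is the cheaper substitute.
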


We can then assume that $n$ is even (so $n\geq 2$) and that $c_1(M)=-(n+1)[\omega]$, which says that
$K_M$ is positive. By a theorem due independently to Yau \cite{yau1} and  Aubin \cite{aubin}
 we know that $M$ then admits a unique K\"ahler-Einstein
metric with constant Ricci curvature equal to $-1$, that is a K\"ahler metric
$\omega_{\rm KE}$ such that
\begin{equation}\label{ke}
\Ric(\omega_{\rm KE})=-\omega_{\rm KE}.
\end{equation}
Recall here that the Riemann curvature tensor of a K\"ahler metric $\omega=\sqrt{-1}g_{i\ov{j}}dz^i\wedge d\ov{z}^j$ in local holomorphic coordinates has components given by
$$R_{i\ov{j}k\ov{\ell}}=-\frac{\de^2g_{k\ov{\ell}}}{\de z^i\de \ov{z}^j}+g^{p\ov{q}}\frac{\de g_{k\ov{q}}}{\de z^i}\frac{\de g_{p\ov{\ell}}}{\de \ov{z}^j},$$
the Ricci curvature tensor is its trace
$$R_{i\ov{j}}=g^{k\ov{\ell}}R_{i\ov{j}k\ov{\ell}}=-\frac{\de^2\log\det(g_{k\ov{\ell}})}{\de z^i\de \ov{z}^j},$$
and the Ricci form is defined by
$$\Ric(\omega)=\sqrt{-1}R_{i\ov{j}}dz^i\wedge d\ov{z}^j,$$
so that the K\"ahler-Einstein condition \eqref{ke} is equivalent to
$$R_{i\ov{j}}=-g_{i\ov{j}}.$$
With this in mind, we have the following:
\begin{lem}
If $(M,\omega)$ is a K\"ahler-Einstein manifold of complex dimension $n\geq 2$, so that
$\Ric(\omega)=\lambda\omega$ for some $\lambda\in \R$, then we have
\begin{equation}\label{chern}
\left(\frac{2(n+1)}{n}c_2(M)-c_1^2(M)\right)\cdot [\omega]^{n-2}\geq 0,
\end{equation}
with equality iff $\omega$ has constant holomorphic sectional curvature.
\end{lem}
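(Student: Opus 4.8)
The plan is to evaluate the left-hand side of \eqref{chern} by Chern-Weil theory, writing both Chern numbers as integrals of pointwise curvature expressions of $\omega$, and then to use the Einstein hypothesis to identify the integrand as a nonnegative quantity. First I would represent $c_1(M)=\frac{1}{2\pi}\Ric(\omega)$ and $c_2(M)$ by the standard Chern forms built from the curvature matrix $\Theta$ of the Chern connection on the holomorphic tangent bundle $(T^{1,0}M,\omega)$. Expanding $\det(I+\frac{\sqrt{-1}}{2\pi}\Theta)$ gives $c_2=\tfrac12 c_1^2+\frac{1}{8\pi^2}\tr(\Theta\wedge\Theta)$, so that $c_2$ is a universal quadratic expression in the curvature.

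The second step is purely algebraic: for $(1,1)$- and $(2,2)$-forms, wedging with $\frac{\omega^{n-2}}{(n-2)!}$ and comparing with $\frac{\omega^n}{n!}$ amounts to taking metric contractions (the adjoint of the Lefschetz operator). Carrying this out, I expect to obtain the pointwise identities
$$c_1^2\wedge\frac{\omega^{n-2}}{(n-2)!}=\frac{1}{4\pi^2}\left(s^2-|\Ric|^2\right)\frac{\omega^n}{n!},\qquad c_2\wedge\frac{\omega^{n-2}}{(n-2)!}=\frac{1}{8\pi^2}\left(|R|^2-2|\Ric|^2+s^2\right)\frac{\omega^n}{n!},$$
where $s=g^{i\ov{j}}R_{i\ov{j}}$ is the scalar curvature and $|\Ric|^2$, $|R|^2$ are the squared norms of the Ricci and full curvature tensors. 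Substituting into the left side of \eqref{chern}, the combination $\frac{2(n+1)}{n}c_2-c_1^2$ yields the integrand
$$\frac{1}{4\pi^2 n}\left[(n+1)|R|^2-(n+2)|\Ric|^2+s^2\right]\frac{\omega^n}{n!}.$$

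Now I would invoke the hypothesis $\Ric(\omega)=\lambda\omega$, which forces $s=n\lambda$ and $|\Ric|^2=s^2/n$, reducing the bracket to $(n+1)|R|^2-\frac{2s^2}{n}$. The final and decisive step is to recognize this as a multiple of the squared norm of the trace-free part of the curvature. Writing $S_{i\ov{j}k\ov{\ell}}=g_{i\ov{j}}g_{k\ov{\ell}}+g_{i\ov{\ell}}g_{k\ov{j}}$ for the model tensor of constant holomorphic sectional curvature and setting $R^0=R-\frac{s}{n(n+1)}S$ for the deviation, a direct computation gives $|S|^2=2n(n+1)$ and $\langle R,S\rangle=2s$, hence $\langle R^0,S\rangle=0$ and the orthogonal splitting $|R|^2=|R^0|^2+\frac{2s^2}{n(n+1)}$. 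Feeding this back collapses the bracket to $(n+1)|R^0|^2$, so the integrand becomes $\frac{n+1}{4\pi^2 n}|R^0|^2\frac{\omega^n}{n!}\geq 0$, which upon integration gives \eqref{chern}; equality holds iff $R^0\equiv 0$, that is iff $R_{i\ov{j}k\ov{\ell}}=\frac{c}{2}\,S_{i\ov{j}k\ov{\ell}}$, which is exactly the condition that $\omega$ have constant holomorphic sectional curvature. I expect the main obstacle to be the bookkeeping in the second step, namely correctly evaluating the $(2,2)$-form contraction that produces the term $|R|^2$ in the formula for $c_2$, together with checking that the specific coefficient $\frac{2(n+1)}{n}$ is the unique choice for which the Ricci and scalar terms combine, under the Einstein condition, into the perfect square $|R^0|^2$.
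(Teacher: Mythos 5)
Your proposal is correct and follows essentially the same route as the paper: Chern--Weil representatives contracted against $\omega^{n-2}$ to express $\left(\frac{2(n+1)}{n}c_2-c_1^2\right)\cdot[\omega]^{n-2}$ as the integral of a pointwise curvature quantity, the Einstein condition to reduce the Ricci and scalar terms, and the identification of the result as a positive multiple of $\int_M|R^0|^2\,\omega^n$ for the same trace-modified tensor $R^0=R-\frac{\lambda}{n+1}(g_{i\ov{j}}g_{k\ov{\ell}}+g_{i\ov{\ell}}g_{k\ov{j}})$, whose vanishing characterizes constant holomorphic sectional curvature. Your intermediate formulas (e.g.\ $|S|^2=2n(n+1)$, $\langle R,S\rangle=2s$, and the coefficient $\frac{1}{4\pi^2 n}\left[(n+1)|R|^2-(n+2)|\Ric|^2+s^2\right]$) all check against the paper's computation.
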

\begin{proof}
The tensor $$R^0_{i\ov{j}k\ov{\ell}}=R_{i\ov{j}k\ov{\ell}}-\frac{\lambda}{n+1}(g_{i\ov{j}} g_{k\ov{\ell}}
 +g_{i\ov{\ell}}g_{k\ov{j}})$$
vanishes iff $\omega$ has constant holomorphic sectional curvature (see e.g. \cite[Proposition IX.7.6]{kn}). Its tensorial norm square is easily computed as
\[\begin{split}
|\textrm{Rm}^0|^2&=g^{i\ov{q}}g^{p\ov{j}}g^{k\ov{s}}g^{r\ov{\ell}} R^0_{i\ov{j}k\ov{\ell}} R^0_{p\ov{q}r\ov{s}}\\
&=|\textrm{Rm}|^2+\frac{\lambda^2}{(n+1)^2}g^{i\ov{q}}g^{p\ov{j}}g^{k\ov{s}}g^{r\ov{\ell}}(g_{i\ov{j}} g_{k\ov{\ell}}+g_{i\ov{\ell}}g_{k\ov{j}})
(g_{p\ov{q}} g_{r\ov{s}}+g_{p\ov{s}}g_{r\ov{q}})\\
&-\frac{2\lambda}{n+1}g^{i\ov{q}}g^{p\ov{j}}g^{k\ov{s}}g^{r\ov{\ell}}(g_{i\ov{j}} g_{k\ov{\ell}}+g_{i\ov{\ell}}g_{k\ov{j}})R_{p\ov{q}r\ov{s}}\\
&=|\textrm{Rm}|^2+\frac{\lambda^2}{(n+1)^2}(2n^2+2n)-\frac{4\lambda}{n+1}R,
\end{split}\]
where $R$ denotes the scalar curvature.
The assumption $R_{i\ov{j}}=\lambda g_{i\ov{j}}$ gives $R=\lambda n$ and $|\Ric|^2=\lambda^2n$.
Then
$$|\textrm{Rm}^0|^2=|\textrm{Rm}|^2-\frac{2\lambda^2n}{n+1}.$$
On the other hand if $\Omega_i^j=\sqrt{-1}R^j_{i k\ov{\ell}}dz^k\wedge d\ov{z}^\ell$
denote the curvature forms, then Chern-Weil theory says that
$$\frac{1}{2\pi}\Ric(\omega)=\frac{1}{2\pi}\sum_i \Omega^i_i=\frac{\sqrt{-1}}{2\pi}R_{k\ov{\ell}}dz^k\wedge d\overline{z}^\ell,$$
is a closed form that represents $c_1(M)$ in $H^2(M,\mathbb{R})$, while the form
$$\frac{1}{4\pi^2}\tr(\Omega\wedge\Omega)=\frac{1}{4\pi^2}\sum_{k,i} \Omega_i^k\wedge\Omega_k^i
=\frac{(\sqrt{-1})^2}{4\pi^2}\sum_{k,i}R^k_{ip\ov{q}} R^i_{kr\ov{s}}dz^p\wedge d\ov{z}^q\wedge dz^r\wedge d\ov{z}^s,$$
represents $c_1^2(M)-2c_2(M)$. Since \eqref{chern} is an integral inequality, we can ignore torsion in integral cohomology, and so we can use Chern-Weil forms to prove \eqref{chern}.
Given a point $p\in M$ we choose local holomorphic coordinates so that $p$ we have $g_{i\ov{j}}=\delta_{ij}$, and so also
$$\omega^{n}=n! (\sqrt{-1})^n dz^1\wedge d\overline{z}^1\wedge\cdots\wedge dz^n\wedge d\overline{z}^n,$$
\[\begin{split}\omega^{n-2}=&(n-2)! (\sqrt{-1})^{n-2} \sum_{i<j}dz^1\wedge d\overline{z}^1\wedge\cdots\wedge \widehat{dz^i\wedge d\overline{z}^i}\wedge\cdots\\
&\cdots\wedge  \widehat{dz^j\wedge d\overline{z}^j}\wedge\cdots\wedge dz^n\wedge d\overline{z}^n,\end{split}\]
and it follows that at $p$ we have
\begin{equation*}
\begin{split}
n(n-1)\tr(\Omega\wedge\Omega)\wedge\omega^{n-2}&=\sum_{k,i}\sum_{p\neq r}(R^k_{ip\ov{p}} R^i_{kr\ov{r}}-
R^k_{ip\ov{r}} R^i_{kr\ov{p}})\omega^n\\
&=\sum_{k,i,p,r}(R^k_{ip\ov{p}} R^i_{kr\ov{r}}-
R^k_{ip\ov{r}} R^i_{kr\ov{p}})\omega^n\\
&=(|\Ric|^2-|\textrm{Rm}|^2)\omega^n=(\lambda^2 n- |\textrm{Rm}|^2)\omega^n.
\end{split}
\end{equation*}
Hence this holds at all points, and so
\begin{equation*}
\begin{split}
|\textrm{Rm}^0|^2\frac{\omega^n}{n(n-1)}&=-\tr(\Omega\wedge\Omega)\wedge\omega^{n-2}
+\lambda^2\left(\frac{1}{n-1}-\frac{2}{(n+1)(n-1)}\right)\\
&=-\tr(\Omega\wedge\Omega)\wedge\omega^{n-2}+\frac{\lambda^2}{n+1}.
\end{split}
\end{equation*}
Now notice that
$$\frac{1}{4\pi^2}\int_M\lambda^2\omega^n=\frac{1}{4\pi^2}\int_M (\lambda\omega)^2\wedge\omega^{n-2}=c_1^2(M)\cdot [\omega]^{n-2},$$
and so
$$\frac{1}{n(n-1)4\pi^2}\int_M |\textrm{Rm}^0|^2\omega^n=\left(2c_2(M)-\left(1-\frac{1}{n+1}\right)
c_1^2(M)\right)\cdot[\omega]^{n-2},$$
which implies what we want.
\end{proof}
We claim that equality in \eqref{chern} does in fact hold in our case. This will finish the proof of Theorem \ref{main1}, since
then $M$ would have constant negative holomorphic sectional curvature,
and since it is also simply connected it would be biholomorphic to the unit ball in $\C^n$ (see e.g. \cite[Theorem IX.7.9]{kn}), which is impossible.

We already know that $c_1^2(M)=(n+1)^2[\omega^2]$. To compute $c_2(M)$ we notice that
by definition $p_1(M)=p_1(TM)=-c_2(TM\otimes\C)$. But $TM\otimes\C\cong TM\oplus \overline{TM}$
and the Chern classes satisfy $c_k(\overline{TM})=(-1)^k c_k(TM)$, so
\begin{equation}\label{pont2}
\begin{split}
p_1(M)&=-c_2(TM\oplus\overline{TM})=-c_2(TM)-c_2(\overline{TM})-c_1(TM)\cdot c_1(\overline{TM})\\
&=-2c_2(M)+c_1^2(M).
\end{split}
\end{equation}
Putting this together with \eqref{pont} we get
$$2 c_2(M)=(n+1)^2[\omega^2]-(n+1)[\omega^2]=n(n+1)[\omega^2],$$
and thus equality holds in \eqref{chern}. This completes the proof of Theorem \ref{main1}.
\end{proof}

\begin{proof}[Proof of Theorem \ref{main2}] Let us denote by $\tau(M)$ the signature of $M$, which is the difference between the number of positive and negative eigenvalues for the intersection form $$H_2(M,\mathbb{Z})\times H_2(M,\mathbb{Z})\to\mathbb{Z}.$$
The signature is a topological invariant (up to sign), and so $$\tau(M)=\pm\tau(\mathbb{CP}^2)=\pm 1.$$
Hirzebruch's
Signature Theorem \cite[p.235]{Hu} gives
$$\tau(M)=\frac{1}{3}\int_M p_1(M).$$ But from \eqref{pont2} we get
$$\frac{1}{3}\int_M (c_1^2(M)-2c_2(M))=\pm 1,$$
and Chern-Gauss-Bonnet's Theorem \cite[p.235]{Hu} gives
$$\int_M c_2(M)=\chi(M)=\chi(\mathbb{CP}^2)=3,$$
and so $$\int_M c_1^2(M)= 3(2\pm 1)>0.$$
A theorem of Kodaira \cite{koda} then says that $M$ is projective. As before we see that $\chi(M,\mathcal{O})=1$
and then Riemann-Roch (see \cite[p.233]{Hu}) gives
$$\chi(M,\mathcal{O})=\frac{K_M^2+\chi(M)}{12}=\frac{K_M^2+3}{12},$$
which gives $\int_M c_1^2(M)=K_M^2=9$ (so in fact $\tau(M)=1$). Let $\omega$ be as before, then $c_1(M)=\lambda[\omega]$
for some $\lambda\in\Z$. Then we have that $\lambda=\pm 3$, and these are exactly the same cases as in Theorem \ref{main1}. If $\lambda=3$, we need to check that $\dim H^0(M,L)=3$. But we have $K_M=-3L$ and $K_M\cdot L=-3$ so Riemann Roch \cite[p.233]{Hu} gives
$$\chi(M,L)=1+\frac{L^2-K_M\cdot L}{2}=3.$$
Serre duality and Kodaira vanishing give
$$H^1(M,L)\cong H^1(M,K_M-L)=0,$$
because $K_M-L=-4L$ is negative, and also
$$H^2(M,L)\cong H^0(M,K_M-L)=0.$$
So $\chi(M,L)=\dim H^0(M,L)=3$.
Then the proof continues as in Theorem \ref{main1}.
\end{proof}
\begin{proof}[Proof of Lemma \ref{kobo}]
Let $(\varphi_1,\dots,\varphi_{n+1})$ be a basis of $H^0(M,L)$ and let $D_j=\{\varphi_j=0\}$ be
the corresponding divisors (they are nonempty, because otherwise $L$ would be trivial, and so
it would have $\dim H^0(M,L)=1$). Define $V_n=M$ and $$V_{n-k}=D_1\cap\dots\cap D_k$$
for $1\leq k\leq n$.
\begin{lem}\label{restr}
For each $0\leq r\leq n$ we have that
\begin{itemize}
\item[(1)] $V_{n-r}$ is irreducible, of dimension $n-r$ and Poincar\'e dual to $c_1^r(L)$\\
\item[(2)] The sequence
$$0\to\spn(\varphi_1,\dots,\varphi_r)\to H^0(M,L)\to H^0(V_{n-r},L)$$
is exact, where the last map is given by restriction.
\end{itemize}
\end{lem}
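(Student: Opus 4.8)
The plan is to argue by induction on $r$. The base case $r=0$ is immediate: $V_n=M$ is irreducible of dimension $n$ and Poincar\'e dual to $c_1^0(L)=1$, while the sequence in (2) reduces to the identity map $H^0(M,L)\to H^0(M,L)$, which has trivial kernel. For the inductive step I assume (1) and (2) hold for some $r$ with $0\leq r<n$ and deduce them for $r+1$.

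First I would examine the restriction of $\varphi_{r+1}$ to $V_{n-r}$. By the exactness in part (2) of the inductive hypothesis, the kernel of $H^0(M,L)\to H^0(V_{n-r},L)$ is exactly $\spn(\varphi_1,\dots,\varphi_r)$; since $\varphi_{r+1}$ is independent from these, its restriction $\varphi_{r+1}|_{V_{n-r}}$ is a nonzero holomorphic section of the ample line bundle $L|_{V_{n-r}}$ over the irreducible variety $V_{n-r}$, which has dimension $n-r\geq 1$. Its zero locus is precisely $V_{n-r}\cap D_{r+1}=V_{n-r-1}$, so $V_{n-r-1}$ is the support of the effective divisor cut out by $\varphi_{r+1}|_{V_{n-r}}$; this divisor is nonempty (an ample bundle on a positive-dimensional variety admits no nowhere-vanishing section), it is pure of dimension $n-r-1$, and its associated cycle equals $c_1(L)\cap [V_{n-r}]$. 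Combined with $[V_{n-r}]$ being Poincar\'e dual to $c_1^r(L)$, the cycle $[V_{n-r-1}]$ is Poincar\'e dual to $c_1^{r+1}(L)$.

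The crux, namely the irreducibility, I would then settle by a degree count using the normalization $\int_M c_1^n(L)=1$. Writing the effective divisor as $\sum_i m_i W_i$ with $m_i\geq 1$ and $W_i$ irreducible of dimension $n-r-1$, pairing with $c_1^{n-r-1}(L)$ and using the Poincar\'e duality just established gives $\sum_i m_i\int_{W_i}c_1(L)^{n-r-1}=\int_M c_1^n(L)=1$. Each integral $\int_{W_i}c_1(L)^{n-r-1}$ is a strictly positive integer, since $L|_{W_i}$ is ample on the variety $W_i$ of dimension $n-r-1$; hence the sum can equal $1$ only if there is a single component with multiplicity one. This shows $V_{n-r-1}$ is irreducible and reduced of dimension $n-r-1$, completing part (1) for $r+1$. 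I expect this to be the main point of the argument, although the input that makes it work, that top self-intersections of ample bundles are positive integers together with the hypothesis $\int_M c_1^n(L)=1$, is exactly what is available.

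Finally, for part (2) for $r+1$, the inclusion of $\spn(\varphi_1,\dots,\varphi_{r+1})$ into the kernel of restriction to $V_{n-r-1}$ is clear, since $\varphi_1,\dots,\varphi_r$ already vanish on $V_{n-r}\supset V_{n-r-1}$ while $\varphi_{r+1}$ vanishes on $D_{r+1}\supset V_{n-r-1}$. For the reverse inclusion I would use that, $V_{n-r-1}$ being the zero divisor of the regular section $\varphi_{r+1}|_{V_{n-r}}$ (regular because $V_{n-r}$ is reduced and irreducible, so the section is a non-zero-divisor), there is a short exact sequence
$$0\to \Op{V_{n-r}}\xrightarrow{\cdot\varphi_{r+1}} L|_{V_{n-r}}\to L|_{V_{n-r-1}}\to 0,$$
and since $H^0(V_{n-r},\Op{V_{n-r}})=\C$ by compactness and irreducibility, the kernel of $H^0(V_{n-r},L)\to H^0(V_{n-r-1},L)$ is exactly $\C\cdot\varphi_{r+1}|_{V_{n-r}}$. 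Thus any $\sigma\in H^0(M,L)$ vanishing on $V_{n-r-1}$ satisfies $\sigma|_{V_{n-r}}=c\,\varphi_{r+1}|_{V_{n-r}}$ for some constant $c$; then $\sigma-c\varphi_{r+1}$ restricts to zero on $V_{n-r}$, so by the inductive hypothesis (2) it lies in $\spn(\varphi_1,\dots,\varphi_r)$, giving $\sigma\in\spn(\varphi_1,\dots,\varphi_{r+1})$. This closes the induction.
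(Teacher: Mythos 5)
Your proposal is correct and follows essentially the same route as the paper: induction on $r$, with irreducibility forced by the degree count $\int_M c_1^n(L)=1$ against the positivity of $\int_{W_i}c_1(L)^{n-r-1}$, and part (2) obtained from the restriction exact sequence $0\to \Op{V_{n-r}}\to L|_{V_{n-r}}\to L|_{V_{n-r-1}}\to 0$. Your version is if anything slightly more careful, since you track multiplicities and so also conclude that $V_{n-r-1}$ is reduced.
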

\begin{proof}
The proof is by induction on $r$, the case $r=0$ being obvious. Assuming that $(1)$ and $(2)$ hold for $r-1$, we see that $V_{n-r+1}$ is irreducible and that $\varphi_r$ is not identically zero on it. Hence
$V_{n-r}=\{x\in V_{n-r+1}\ |\ \varphi_r(x)=0\}$ is an effective divisor on $V_{n-r+1}$ and so it can be expressed as a sum of irreducible subvarieties of dimension $n-r$. Since $c_1^{r-1}(L)$ is dual to
$V_{n-r+1}$ and $c_1(L)$ is dual to $D_r$ we see that $c_1^r(L)$ is dual to $V_{n-r}$. If $V_{n-r}$ were reducible, then $V_{n-r}=V'+V''$ and so
\begin{equation*}
\begin{split}
1&=\int_M c_1^n(L)=\int_M c_1^r(L)\cdot c_1^{n-r}(L)=\int_{V_{n-r}} c_1^{n-r}(L)\\
&=\int_{V'} c_1^{n-r}(L)+\int_{V''} c_1^{n-r}(L).
\end{split}
\end{equation*}
But since $L$ is positive, the last two term are both positive integers, and this is a contradiction. Thus $(1)$ is proved. As for $(2)$, the restriction exact sequence
$$0\to \Op{V_{n-r+1}}\to \Op{V_{n-r+1}}(L)\to \Op{V_{n-r}}(L)\to 0,$$
gives
$$0\to H^0(V_{n-r+1},\mathcal{O})\to H^0(V_{n-r+1},L)\to H^0(V_{n-r},L),$$
where the first map is given by multiplication by $\varphi_r$.
This means that the kernel of the restriction map $H^0(V_{n-r+1},L)\to H^0(V_{n-r},L)$
is spanned by $\varphi_r$. This together with the statement in $(2)$ for $r-1$ proves $(2)$ for $r$.
\end{proof}
Now we apply Lemma \ref{restr} with $r=n$ and see that $V_0$ is a single point and that $\varphi_{n+1}$
does not vanish there. So given any point of $M$ there is a section of $L$ that does not vanish there (i.e.
$L$ is base-point-free). Then we can define a holomorphic map $f:M\to\mathbb{CP}^n$
by sending $x$ to $\{\varphi\in H^0(M,L)\ |\ \varphi(x)=0\}$. This is a hyperplane in $H^0(M,L)\cong\C^{n+1}$ and so gives a point in $\mathbb{CP}^n$. If $y\in\mathbb{CP}^n$ corresponds to a hyperplane,
which is spanned by some sections $(\varphi_1,\dots,\varphi_n)$, then $f(x)=y$ iff $\varphi_1(x)=\dots=
\varphi_n(x)=0$. Again Lemma \ref{restr} with $r=n$ says that $x=V_0$ exists and is unique, and so $f$ is
a bijection.
\end{proof}

\section{Closing remarks}\label{sectclose}

As a partial generalization of Theorems \ref{main1} and \ref{main2}, Libgober-Wood \cite{LW} proved that a compact K\"ahler manifold of complex dimension $n\leq 6$ which is homotopy equivalent to
$\mathbb{CP}^n$ must be biholomorphic to it.

A natural question is whether the K\"ahler hypothesis is really necessary in Theorem \ref{main1}, and so one can ask whether a compact complex manifold diffeomorphic to $\mathbb{CP}^n$ must be biholomorphic to it. This is a well-known open problem (see e.g. \cite{LW}), and it is known that if this is true when $n=3$ then there is no complex manifold diffeomorphic to $S^6$ (another famous open problem, see e.g. \cite{Le}):
\begin{prop}
If there exists a compact complex manifold $M$ diffeomorphic to $S^6$, then there exists a compact complex manifold $\ti{M}$ diffeomorphic to $\mathbb{CP}^3$ but not
biholomorphic to it.
\end{prop}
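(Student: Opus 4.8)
The plan is to build the desired manifold $\ti{M}$ by starting from the hypothesized complex structure on $S^6$ and performing a connected-sum or blow-up type construction that produces something diffeomorphic to $\mathbb{CP}^3$ yet carries a complex structure, while arranging that it cannot be biholomorphic to $\mathbb{CP}^3$. The most natural candidate is to take the given compact complex manifold $M$ diffeomorphic to $S^6$ and blow it up at a point. Blowing up a point on a complex $3$-fold replaces the point by a copy of $\mathbb{CP}^2$ (the exceptional divisor), and this is a holomorphic operation, so the result $\ti{M}$ is automatically a compact complex manifold.

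The key steps, in order, would be the following. First I would verify that the blow-up $\ti{M}$ of $M$ at a point is diffeomorphic to $\mathbb{CP}^3$. At the smooth level, blowing up a point on a $6$-manifold is the operation of connected sum with $\ov{\mathbb{CP}^3}$ (complex projective space with reversed orientation), so $\ti{M} \cong S^6 \# \ov{\mathbb{CP}^3}$ as smooth manifolds. Since $S^6 \# X \cong X$ for any smooth $6$-manifold $X$ (connect-summing with a sphere of the same dimension does nothing up to diffeomorphism), this gives $\ti{M}$ diffeomorphic to $\ov{\mathbb{CP}^3}$, which is diffeomorphic to $\mathbb{CP}^3$ (in odd complex dimension, or by an orientation-reversing argument, the two orientations agree as smooth manifolds). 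Second, I would argue that $\ti{M}$ cannot be biholomorphic to $\mathbb{CP}^3$. Here the point is that a blow-up at a point always contains an exceptional divisor $E \cong \mathbb{CP}^2$ whose normal bundle is $\mathcal{O}(-1)$, so $E$ has negative self-intersection in the appropriate sense; equivalently, $\ti{M}$ admits a nonconstant holomorphic map (the blow-down) to $M$ contracting $E$ to a point. If $\ti{M}$ were biholomorphic to $\mathbb{CP}^3$, then $M$ would be a smooth projective (or at least compact complex) variety receiving $\mathbb{CP}^3$ as a blow-up, forcing $M$ to be a complex $3$-fold with the homotopy type of $S^6$; but $\mathbb{CP}^3$ contains no subvariety isomorphic to $\mathbb{CP}^2$ with negative normal bundle that can be contracted, since every effective divisor in $\mathbb{CP}^3$ is ample. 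This contradiction shows $\ti{M} \not\cong \mathbb{CP}^3$ biholomorphically.

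I expect the main obstacle to be the second step, namely ruling out a biholomorphism $\ti{M}\cong\mathbb{CP}^3$ cleanly. The cohomological or intersection-theoretic invariants of $\ti{M}$ and $\mathbb{CP}^3$ must be compared carefully: although $\ti{M}$ and $\mathbb{CP}^3$ are diffeomorphic (hence share all topological invariants, including Chern numbers computed from the real tangent bundle via Pontrjagin classes), their \emph{complex} structures differ, and one must extract a purely holomorphic invariant that distinguishes them. The cleanest such invariant is the existence of the exceptional divisor: $\mathbb{CP}^3$ has Picard number $1$ and every effective divisor is a positive multiple of the hyperplane class, so there is no curve or divisor with negative self-intersection that can be holomorphically contracted, whereas $\ti{M}$ by construction contains exactly such an exceptional $\mathbb{CP}^2$. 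Making this rigorous requires knowing that the blow-down morphism is genuinely non-invertible and that the exceptional locus is preserved by any biholomorphism, which follows from the fact that a biholomorphism must preserve the canonical bundle and the structure of rational curves with negative normal bundle.
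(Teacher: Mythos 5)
Your construction is exactly the paper's: blow up the hypothetical complex $S^6$ at a point, identify the result smoothly with $S^6\sharp\ov{\mathbb{CP}^3}\cong\ov{\mathbb{CP}^3}\cong\mathbb{CP}^3$ (conjugation reverses orientation on $\mathbb{CP}^3$), and that part of your argument is complete. Where you diverge is in ruling out a biholomorphism $\ti{M}\cong\mathbb{CP}^3$. The paper does this by a closed-form Chern number computation: since $b_2(M)=0$ forces $c_1(M)=0$, one gets $c_1(\ti{M})=-2[E]$ and hence $\int_{\ti{M}}c_1(\ti{M})^3=-8\int_E[E]^2|_E=-8$, which cannot equal $\int_{\mathbb{CP}^3}c_1(\mathbb{CP}^3)^3=64$; this is a one-line, purely numerical contradiction. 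You instead invoke the exceptional divisor itself: $E\cong\mathbb{CP}^2$ has normal bundle $\Oo{-1}$, while every effective divisor on $\mathbb{CP}^3$ is linearly equivalent to $dH$ with $d\geq 1$ and hence ample, so its normal bundle $\Oo{d}|_D$ is positive --- a genuine contradiction, and your route can be closed off in exactly this one sentence without any of the machinery about ``preserving rational curves'' that you worry about at the end. Two small cautions: the phrase ``negative self-intersection'' is misleading in complex dimension $3$, since here $\int_{\ti{M}}[E]^3=\int_{\mathbb{CP}^2}c_1(\Oo{-1})^2=+1$; the correct negativity statement is about the normal bundle (equivalently $[E]^2\cdot[\omega]<0$ against a K\"ahler class), so lean on that rather than on surface intuition. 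Also note that $c_1^3$ is exactly the kind of invariant you say must be ``extracted'': it is a Chern number of the complex structure, not determined by the underlying smooth oriented manifold (unlike the Pontrjagin numbers), which is precisely why the paper's comparison $-8\neq 64$ is legitimate and is perhaps the cleanest way to finish.
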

This well-known fact was remarked already in \cite[p.223]{Hi2}.
\begin{proof}
Let $M$ be a compact complex manifold diffeomorphic to $S^6$, and let $\ti{M}$ be its blowup at one point $p\in M.$ This is a compact complex manifold which is diffeomorphic to the connected sum $S^6\sharp \overline{\mathbb{CP}^3}$, see e.g. \cite[Proposition 2.5.8]{Hu}. This is of course diffeomorphic to $\overline{\mathbb{CP}^3}$,
and so also to $\mathbb{CP}^3$ (in fact, it is even oriented-diffeomorphic to $\mathbb{CP}^3$, since this manifold has the explicit orientation-reversing diffeomorphism $[z_0:\dots:z_3]\mapsto [\overline{z_0}:\dots:\overline{z_3}]$).
 So $\ti{M}$ is diffeomorphic to $\mathbb{CP}^3$, and if $\ti{M}$ was biholomorphic to $\mathbb{CP}^3$ we would have
$$\int_{\ti{M}}c_1(\ti{M})^3=\int_{\mathbb{CP}^3}c_1(\mathbb{CP}^3)^3=64.$$
But if we let $\pi:\ti{M}\to M$ be the blowup map and $E=\pi^{-1}(p)$ be its exceptional divisor (which is biholomorphic to $\mathbb{CP}^2$), then we have (see \cite[Proposition 2.5.5]{Hu})
$$c_1(\ti{M})=\pi^*c_1(M)-2[E],$$
where $[E]$ denotes the Poincar\'e dual of $E$. Since $b_2(M)=0$ we have $c_1(M)=0$, and so
$$\int_{\ti{M}}c_1(\ti{M})^3=-8\int_M [E]^3=-8\int_{E} [E]^2=-8\int_{\mathbb{CP}^2}c_1(\mathcal{O}(-1))^2=-8,$$
since $[E]|_E$ equals the first Chern class of the tautological bundle $\mathcal{O}(-1)$ over $\mathbb{CP}^2$ (see  \cite[Corollary 2.5.6]{Hu}). Therefore $\ti{M}$ is not biholomorphic to $\mathbb{CP}^3$, as claimed.
\end{proof}


\begin{thebibliography}{2}
\bibitem{aubin} Aubin, T. \emph{\'Equations du type Monge-Amp\`ere sur les vari\'et\'es k\"ahleriennes compactes}, C. R. Acad. Sci. Paris S\'er. A-B {\bf 283} (1976), no. 3, Aiii, A119--A121.
\bibitem{Hi2} Hirzebruch, F. {\em Some problems on differentiable and complex manifolds}, Ann. of Math. (2) {\bf 60} (1954), 213--236.
\bibitem{Hi} Hirzebruch, F. {\em Topological methods in algebraic geometry}, Springer-Verlag, Berlin, 1995.
\bibitem{hk} Hirzebruch, F., Kodaira, K. \emph{On the complex projective spaces}, J. Math. Pures Appl. {\bf 36} (1957), 201--216.
\bibitem{Hu} Huybrechts, D. {\em Complex geometry. An introduction}, Springer-Verlag, Berlin, 2005.
\bibitem{kn} Kobayashi, S., Nomizu, K. \emph{Foundations of differential geometry, Vol. II}, John Wiley \& Sons, 1969.
\bibitem{ko} Kobayashi, S., Ochiai, T. \emph{Characterizations of complex projective spaces and hyperquadrics}, J. Math. Kyoto Univ. {\bf 13} (1973), 31--47.
\bibitem{koda} Kodaira, K. \emph{On the structure of compact complex analytic surfaces. I}, Amer. J. Math. {\bf 86} (1964), 751--798.
\bibitem{Le} LeBrun, C. {\em Orthogonal complex structures on $S^6$}, Proc. Amer. Math. Soc. {\bf 101} (1987), no.1, 136--138.
\bibitem{LW} Libgober, A.S., Wood, J. W. {\em Uniqueness of the complex structure on K\"ahler manifolds of certain homotopy types}, J. Differential Geom. {\bf 32} (1990), no. 1, 139--154.
\bibitem{ms} Milnor, J.W., Stasheff, J.D. \emph{Characteristic classes}, Princeton University Press, 1974.
\bibitem{nov} Novikov, S.P. \emph{Rational Pontrjagin classes. Homeomorphism and homotopy type of closed manifolds. I}, Izv. Akad. Nauk SSSR Ser. Mat. {\bf 29} (1965), 1373--1388.
\bibitem{Se} Severi, F. {\em Some remarks on the topological characterization of algebraic surfaces}, 1954.
\bibitem{yaupnas} Yau, S.-T. \emph{Calabi's conjecture and some new results in algebraic geometry}, Proc. Nat. Acad. Sci. U.S.A. {\bf 74} (1977), no. 5, 1798--1799.
\bibitem{yau1} Yau, S.-T. \emph{On the Ricci curvature of a compact K\"ahler manifold and the complex Monge-Amp\`ere equation, I}, Comm. Pure Appl. Math. {\bf 31} (1978), 339--411.
\end{thebibliography}
\end{document}